\xpatchcmd{\@makeschapterhead}{%
  \Huge \bfseries  #1\par\nobreak%
}{%
  \Huge \bfseries\centering #1\par\nobreak%
}{\typeout{Patched makeschapterhead}}{\typeout{patching of @makeschapterhead failed}}
\xpatchcmd{\@makechapterhead}{%
  \huge\bfseries \@chapapp\space \thechapter
}{%
  \huge\bfseries\centering \@chapapp\space \thechapter
}{\typeout{Patched @makechapterhead}}{\typeout{Patching of @makechapterhead failed}}
\newenvironment{Proof}[1][\proofname]{\proof[#1]\mbox{}\\*}{\endproof}
\newtheoremstyle{break}
  {}
  {}
  {\itshape}
  {}
  {\bfseries}
  {}
  {\newline}
  {}
\theoremstyle{break}
\newtheorem{teor}{Theorem}[section]
\newtheorem{cor}{Corollary}[section]
\newtheorem{lem}{Lemma}[section]
\title{\textbf{Groups with Finiteness Conditions on commutator subgroups}}
\author{
    Rosa Cascella \\
    \textit{Dipartimento di Matematica e Applicazioni} \\
\textit{Università di Napoli Federico II} \\
    \textit{Napoli, Italy} \\
    \texttt{rosa.cascella@unina.it} 
}
\date{}
\begin{document}
\maketitle

\begin{abstract}
    \noindent The structure of groups for which certain sets of commutator subgroups are finite is investigated, with a particular focus on the relationship between these groups and those with finite derived subgroup.
\end{abstract}


\section{Introduction}

A finiteness condition is a group theoretical property satisfied by every finite group. In many cases, the imposition of a finiteness condition produces strong restrictions on the structure of a group. For instance, this is the case of the property $FC$, which is defined requiring that each element of the group has only finitely many conjugates. The behavior of $FC$-groups has been investigated by several authors over the last seventy years; among the basic properties of $FC$-groups, it is well known that the commutator subgroup of any $FC$-group is locally finite and that finitely generated groups with the property $FC$ are central-by-finite. A special subclass of $FC$-groups is formed by finite-by-abelian groups. A relevant Theorem of Neumann \cite{neu} states that a group $G$ has finite derived subgroup if and only if it has boundedly finite conjugacy classes; moreover, Neumann \cite{neu2} proved that a group $G$ is finite-by-abelian if and only if each subgroup of $G$ has finite index in its normal closure. Our aim in this paper is to study new finiteness conditions related to the derived subgroup of a group. In \cite{ci} are described groups with finitely many derived groups of subgroups or infinite subgroups. In this paper the properties $\Bar{\mathcal{K}}$ and $\Bar{\mathcal{K}}_{\infty}$ are introduced. We shall say that a group $G$ has the property $\Bar{\mathcal{K}}$ if the set $\{[X,H]\ |\ H\leq G\}$ is finite for each subgroup $X$ of $G$. Groups with $\Bar{\mathcal{K}}$ need not have finite derived subgroup, since any Tarski group (i.e. infinite group whose proper non-trivial subgroups have prime orders) has the property $\Bar{\mathcal{K}}$. On the other hand, it turns out that within the class of locally (soluble-by-finite) groups, the property $\Bar{\mathcal{K}}$ is equivalent to the finiteness of the derived subgroup. 

\medskip

\noindent \textbf{Theorem A.} A group $G$ is finite-by-abelian if and only if it is locally (soluble-by-finite) and has the property $\bar{\mathcal{K}}$.

\medskip

A group $G$ has the more general property $\bar{\mathcal{K}}_{\infty}$ if the set $\{[X,H] \,|\,H\leq G, H \ is \ infinite\}$ is finite for each subgroup $X$ of $G$. Notice that the property $\bar{\mathcal{K}}_{\infty}$ does not imply the finiteness of the derived subgroup, even in the case of soluble groups, as is shown by the locally dihedral 2-group; in particular, the class of $\bar{\mathcal{K}}_{\infty}$-groups is wider than the class of $\bar{\mathcal{K}}$-groups. However, our second main result shows that $\bar{\mathcal{K}}_{\infty}$-groups with infinite derived subgroup have a very restricted structure. 

\medskip

\noindent \textbf{Theorem B.} A soluble-by-finite group $G$ has the property $\bar{\mathcal{K}}_{\infty}$ if and only if it has finite commutator subgroup or it is a finite extension of a group of the type $p^{\infty}$ for some prime number $p$.

\medskip

Finally, we mention that groups with finitely many derived groups of non-normal subgroups or of infinite non-normal subgroups are studied in \cite{cern}. In \cite{kappa} it is studied the behavior of groups in which the set $\{[x,H] \ | \ H\leq G\}$ (or $\{[x,H] \ | \ H\leq G, \, H \ is \ infinite \}$) is finite for each element $x$ of $G$. Most of the notation is standard and can be found in \cite{robinson}.



\section{Basic properties of $\Bar{\mathcal{K}}$ and $\bar{\mathcal{K}}_{\infty}$-groups}

Clearly, the properties $\Bar{\mathcal{K}}$ and $\bar{\mathcal{K}}_{\infty}$ are inherited by subgroups and homomorphic images. Moreover, if $N$ is any infinite normal subgroup of a $\bar{\mathcal{K}}_{\infty}$-group, then $G/N$ is a $\bar{\mathcal{K}}$-group.

\begin{lem}\label{abel}
    Let $G$ be a $\bar{\mathcal{K}}_{\infty}$-group, and let $A$ be a torsion-free abelian normal subgroup of $G$. Then $A$ is contained in the center $Z(G)$ of $G$.
\end{lem}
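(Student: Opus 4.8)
The plan is to prove that $[A,g]=1$ for every $g\in G$, which suffices since $A$ is abelian. Fix $g\in G$ and let $\phi$ be the automorphism of $A$ induced by conjugation by $g$; writing $A$ additively we have $[a,g]=(\phi-1)(a)$ for $a\in A$ and, more generally, $[\langle g\rangle,B']=(\phi-1)(B')$ for every $\phi$-invariant subgroup $B'\le A$. (This last identity is the one routine computation: one uses $\phi^{i}-1=(\phi^{i-1}+\cdots+1)(\phi-1)$ for $i\ge1$ and the $\phi$-invariance of $(\phi-1)(B')$.) Assume for contradiction that $\phi\neq1$, pick $a\in A$ with $(\phi-1)(a)\neq0$, and let $B=\langle a^{g^{i}}:i\in\mathbb{Z}\rangle$, the $\langle g\rangle$-submodule of $A$ generated by $a$: a nontrivial $\phi$-invariant subgroup.

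The first step extracts a divisibility statement. For each $n\ge1$ the subgroup $nB$ is nontrivial and $\phi$-invariant, hence infinite (it is a nontrivial subgroup of the torsion-free group $A$), and $[\langle g\rangle,nB]=n(\phi-1)(B)=nC$ with $C:=(\phi-1)(B)\neq0$. Since $G$ has $\bar{\mathcal{K}}_{\infty}$, the subgroups $nC$ $(n\ge1)$ take only finitely many values; but if $pC\subsetneq C$ for some prime $p$, torsion-freeness of $A$ gives the strictly descending infinite chain $C\supsetneq pC\supsetneq p^{2}C\supsetneq\cdots$. Hence $C$ is divisible, so it is a nonzero torsion-free divisible group, i.e.\ a $\mathbb{Q}$-vector space, and in particular a direct summand of $B$; moreover $B/C$ is cyclic, generated by the image of $a$, because $\phi$ acts trivially on $B/C$ (as $(\phi-1)(B)\le C$).

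The rest is a short case analysis, each branch closing by a contradiction either with $\bar{\mathcal{K}}_{\infty}$ or with the structural fact that a subgroup of a finitely generated $\mathbb{Z}[1/N]$-module (for a positive integer $N$) contains no nonzero divisible subgroup. If $B/C$ is finite then $a\in C$ by divisibility, so $B=C$; writing $B=\mathbb{Z}[\phi,\phi^{-1}]\,a$ and using $(\phi-1)(B)=B$ produces a relation $f(\phi)a=0$ with $f(1)=1$ (in particular $f\neq0$), which places $B$ inside a finitely generated $\mathbb{Z}[1/N]$-module and so forbids $B$ from being divisible. If $B/C\cong\mathbb{Z}$, write $B=C\oplus\mathbb{Z}a_{1}$ and set $\psi:=\phi|_{C}$ and $c_{1}:=(\phi-1)(a_{1})\in C$. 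If $\psi=1$ then $C=\mathbb{Z}c_{1}$, contradicting divisibility. If $\psi\neq1$ and $c_{1}\notin(\psi-1)C$, then the subgroups $H_{n}:=C+n\mathbb{Z}a_{1}$ $(n\ge1)$ are infinite and $\phi$-invariant (they lie between $C$ and $B$, on whose quotient $\phi$ is trivial), and $[\langle g\rangle,H_{n}]=(\psi-1)C+n\mathbb{Z}c_{1}$; divisibility of $C$ forces $c_{1}$ to have infinite order modulo $(\psi-1)C$, so these subgroups are pairwise distinct, contradicting $\bar{\mathcal{K}}_{\infty}$. If $\psi\neq1$ and $c_{1}\in(\psi-1)C$, one replaces $a_{1}$ by a $\phi$-fixed element of the same coset modulo $C$, writes $a=c_{3}+a_{1}$ with $c_{3}\in C$, and deduces from $B=\mathbb{Z}[\phi,\phi^{-1}]\,a$ that $C=(\psi-1)\!\left(\mathbb{Z}[\psi,\psi^{-1}]\,c_{3}\right)$; this again yields $f(\psi)c_{3}=0$ with $f(1)=1$, so $\mathbb{Z}[\psi,\psi^{-1}]\,c_{3}$ embeds in a finitely generated $\mathbb{Z}[1/N]$-module and contains no nonzero divisible subgroup, contradicting $0\neq C\le\mathbb{Z}[\psi,\psi^{-1}]\,c_{3}$.

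The main obstacle is everything past the easy scaling argument. The conclusion that $C=(\phi-1)(B)$ is divisible is genuinely not enough on its own: for a bare module, $A=\mathbb{Q}$ with $\phi$ multiplication by $2$ has $(\phi-1)(A)=\mathbb{Q}$, nonzero and divisible. Turning divisibility into a contradiction demands either manufacturing finer test subgroups between $C$ and $B$ (and checking each is still infinite and $\phi$-invariant) or invoking the $\mathbb{Z}[1/N]$-module fact, and organising the case split so that exactly one of the two applies is the delicate part.
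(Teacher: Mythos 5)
Your proof is correct, and its first half is essentially the paper's argument in module notation: the paper applies $\bar{\mathcal{K}}_{\infty}$ with $X=\langle x\rangle$ to the infinite cyclic subgroups $\langle a^{n}\rangle$, notes that $[\langle x\rangle,\langle a^{n}\rangle]=([x,a]^{\langle x\rangle})^{n}$ (and $[x,a]^{\langle x\rangle}$ is exactly your $C=(\phi-1)(B)$), and deduces that some power $([x,a]^{\langle x\rangle})^{k!}$ is divisible; your scaling of the whole module by $n$ together with injectivity of multiplication on a torsion-free group gives the slightly stronger conclusion that $C$ itself is divisible. The genuine divergence is in how divisibility is turned into a contradiction. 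The paper does it in one line: $\langle x,a\rangle$ is a finitely generated metabelian group, hence residually finite by P.~Hall's theorem, and a residually finite group has no non-trivial divisible abelian subgroup, so $[x,a]^{k!}=1$ and torsion-freeness gives $[x,a]=1$. You instead re-prove the required special case of that fact by hand: the case split on $B/C$ (finite, or infinite cyclic with the three subcases for $\psi$ and $c_{1}$), the auxiliary test subgroups $H_{n}=C+n\mathbb{Z}a_{1}$, and the observation that a relation $f(\phi)a=0$ with $f(1)=1$ traps $B$ inside a finitely generated $\mathbb{Z}[1/N]$-module, which cannot contain a nonzero divisible subgroup; I checked these steps and they hold (in particular the $H_{n}$ are infinite and $\langle g\rangle$-invariant, and $C/(\psi-1)C$ is torsion-free because $(\psi-1)C$ is a $\mathbb{Q}$-subspace of the $\mathbb{Q}$-vector space $C$, so distinct $n$ give distinct values of $[\langle g\rangle,H_{n}]$). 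What your route buys is self-containment, avoiding the appeal to residual finiteness of finitely generated metabelian groups; what it costs is length and a delicate case analysis that the citation makes unnecessary. One small wording caveat: the identity $[\langle g\rangle,B']=(\phi-1)(B')$ should be stated for subgroups invariant under conjugation by both $g$ and $g^{-1}$ (so that $(\phi-1)(B')$ absorbs $\phi^{-1}$); every subgroup you apply it to is $\langle g\rangle$-invariant, so nothing breaks, but the hypothesis as written (``$\phi$-invariant'') is slightly too weak.
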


\begin{Proof}
    Let $a$ be an element of $A$ and $x$ an element of $G$. Then 
    \begin{center}
        $[<x>,<a^n>]=[x,a^n]^{<x>}=([x,a]^n)^{<x>}=([x,a]^{<x>})^n.$
               
    \end{center}
    It follows that the set $\{([x,a]^{<x>})^{r!} \, | \, r\in \mathbb{N}_0\}$ is finite. Consequently, there exists $k\in \mathbb{N}_0$ such that $$([x,a]^{<x>})^{k!}=([x,a]^{<x>})^{(k+1)!}=....$$ Thus the subgroup $([x,a]^{<x>})^{k!}$ is divisible. On the other hand, $<x,a>$ is a finitely generated metabelian group, so that it is residually finite and hence $[x,a]^{k!}=1$. Therefore, $[x,a]=1$ and $a\in Z(G)$.
\end{Proof}

\begin{lem}\label{fg}
    If $G$ is a $\Bar{\mathcal{K}}$-group, then for every subgroup $H$ of $G$ there exists a subgroup $N$ of $H$ such that $N$ is finitely generated and $N'=H'$. In particular, $G'$ is contained in a finitely generated subgroup of $G$. 
\end{lem}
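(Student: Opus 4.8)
\medskip
\noindent\textbf{Proof strategy.} The plan is to first reduce to the case $H=G$: since the property $\bar{\mathcal{K}}$ is inherited by subgroups, it suffices to prove that a $\bar{\mathcal{K}}$-group $G$ contains a finitely generated subgroup $N$ with $N'=G'$. Indeed, the general statement then follows by applying this to the $\bar{\mathcal{K}}$-group $H$, and the last assertion follows by taking $H=G$, since then $G'=N'\leq N$.

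To construct such an $N$ I would use the finiteness hypothesis twice. Taking $X=G$ in the definition of $\bar{\mathcal{K}}$, the set $\{[G,H]\mid H\leq G\}$ is finite. The subfamily $\{[G,F]\mid F\leq G,\ F\ \text{finitely generated}\}$ is directed by inclusion, because $[G,F_1]$ and $[G,F_2]$ both lie in $[G,\langle F_1,F_2\rangle]$ and $\langle F_1,F_2\rangle$ is again finitely generated; moreover its union is a subgroup of $G$ which contains every commutator $[g,h]$ (as $[g,h]\in[G,\langle h\rangle]$), hence equals $[G,G]=G'$. A nonempty directed subset of a finite poset has a greatest element, so there is a finitely generated $F_0\leq G$ with $[G,F_0]=G'$. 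Now repeat the argument with $X=F_0$: the set $\{[F_0,H]\mid H\leq G\}$ is finite, the family $\{[F_0,F]\mid F\leq G,\ F\ \text{finitely generated}\}$ is directed, and its union equals $[F_0,G]$, which by the previous step and the symmetry $[A,B]=[B,A]$ is exactly $G'$. Hence there is a finitely generated $F_1\leq G$ with $[F_0,F_1]=G'$. Setting $N=\langle F_0,F_1\rangle$, which is finitely generated, we get $G'=[F_0,F_1]\leq[N,N]=N'\leq G'$, so $N'=G'$, as required.

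The parts I expect to be routine are the standard commutator calculus (that $[A,B]$ is a subgroup of $\langle A,B\rangle$ and that $[A,B]=[B,A]$) and the observation that $G'$ is the directed union of the subgroups $[G,F]$ over finitely generated $F\leq G$. The only real idea, and the point where a little care is needed, is that a single application of $\bar{\mathcal{K}}$ only controls the second entry of the commutator: one must use it once to realize $G'$ as $[G,F_0]$, and then once more (after swapping the entries via symmetry) to realize it as $[F_0,F_1]$, so that both entries finally sit inside the single finitely generated subgroup $\langle F_0,F_1\rangle$.
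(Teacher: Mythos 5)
Your proof is correct and takes essentially the same route as the paper's: both apply the $\bar{\mathcal{K}}$ hypothesis twice, once to control the second entry of the commutator and once more (after using $[A,B]=[B,A]$) to control the first, so that $H'$ is realized as the derived subgroup of a single finitely generated subgroup. The only difference is packaging: where you extract the maximum of the finite directed family $\{[X,F]\ :\ F\ \text{finitely generated}\}$, the paper chooses finitely many elements $h_1,\dots,h_t$ and then $k^i_j$ with each $[\langle h_i\rangle,H]$ generated by the subgroups $[\langle h_i\rangle,\langle k^i_j\rangle]$, arriving at the same finitely generated $N\leq H$.
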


\begin{proof}
    Let $H$ be a subgroup of $G$, then $$H'=<[<h>,H]\, |\,h\in H>=<[<h_1>,H],...,[<h_t>,H]>$$ for some $h_1,...,h_t\in H$. Moreover, there exist $k^1_1,...,k^1_{s_1},...,k^t_1,...,k^t_{s_t}\in H$ such that, fixed $i\in \{1,...,t\}$ it holds that
    
    \begin{center}
         $[<h_i>,H]=<[<h_i>,<k>]\,|\,k\in H>$
         $=<[<h_i>,<k^i_1>],...,[<h_i>,<k_{s_i}^i>]>$
         $\leq <<h_i,k^i_1>',...,<h_i,k_{s_i}^i>'>$
         $\leq <h_i,k^i_1,...,k_{s_i}^i>'.$
    \end{center}
    Thus $H'\leq <h_1,...,h_t,k^1_1,...,k_{s_t}^t>'.$
\end{proof}

We note here that the proof of Lemma \ref{fg} still works if the hypothesis that $G$ is a $\bar{\mathcal{K}}$-group is replaced by the requirement that it is a torsion-free $\bar{\mathcal{K}}_{\infty}$-group. As a consequence of the previous Lemma, we have that given a $\Bar{\mathcal{K}}$-group $G$ such that $|G:G'|$ is finite, then $G'$ is finitely generated. We will show now that from the previous results it follows that if $G$ is a locally (soluble-by-finite) $\Bar{\mathcal{K}}$-group, then $G'$ is periodic.

\begin{lem}\label{per}
    Let $G$ be a locally (soluble-by-finite) $\Bar{\mathcal{K}}$-group. Then the commutator subgroup $G'$ of $G$ is periodic.
\end{lem}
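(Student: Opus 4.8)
The plan is to reduce, via Lemma~\ref{fg}, to the case of a finitely generated group and then exploit the structural theory of finitely generated soluble-by-finite groups together with Lemma~\ref{abel}. First I would observe that it suffices to prove that $G'$ has no element of infinite order; by Lemma~\ref{fg} there is a finitely generated subgroup $N$ of $G$ with $N' = G'$, so I may replace $G$ by $N$ and assume $G$ is finitely generated. Being a finitely generated locally (soluble-by-finite) group, $G$ is itself soluble-by-finite, hence $G$ has a finite series whose factors are either finite or abelian; in particular $G'$ is soluble-by-finite and finitely generated (the last clause of Lemma~\ref{fg}).

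Next I would aim to show $G'$ is finite, which certainly gives periodicity. Suppose not. Since $G$ is finitely generated and soluble-by-finite, it satisfies the maximal condition on subgroups if it is polycyclic-by-finite; more generally I would pass to a suitable normal subgroup and look for a torsion-free abelian normal subgroup on which to apply Lemma~\ref{abel}. Concretely, let $R$ be the finite residual or work inside a soluble normal subgroup $S$ of finite index: $S$ has a finitely generated abelian normal subgroup appearing in its derived series, and after factoring out the torsion (which is bounded in a finitely generated abelian group) one obtains a torsion-free abelian normal subgroup $A$ of a finite-index subgroup. The subtlety is that $A$ need not be normal in $G$, only in $S$; but $\bar{\mathcal{K}}_\infty$ (a fortiori $\bar{\mathcal{K}}$) passes to subgroups, so $S$ is a $\bar{\mathcal{K}}$-group and Lemma~\ref{abel} forces $A \leq Z(S)$. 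Iterating up the derived series of $S$, each successive torsion-free abelian factor is central, so $S/(\text{torsion})$ is nilpotent, and a finitely generated nilpotent group whose derived subgroup is infinite has an infinite torsion-free abelian section that would again be central by Lemma~\ref{abel} applied to appropriate preimages — a contradiction unless $S'$ is finite. Then $G'$ is finite-by-(finite) hence finite, so periodic.

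The main obstacle I expect is the normality issue: Lemma~\ref{abel} requires the torsion-free abelian subgroup to be normal in the whole $\bar{\mathcal{K}}_\infty$-group, whereas in a finitely generated soluble-by-finite group the natural torsion-free abelian subgroups sit inside a soluble subgroup of finite index, not in $G$ itself. The way around this is precisely to apply the lemma to that finite-index soluble subgroup (which inherits $\bar{\mathcal{K}}$), deduce it is nilpotent-by-(bounded torsion), and then transfer the conclusion back to $G'$ using that $G'$ has finite index-bounded behaviour over $S'$. A cleaner alternative, which I would try first, is: take any $g\in G'$ of infinite order; then $\langle g\rangle$ generates (inside the finitely generated nilpotent-by-finite group $G$) a torsion-free abelian subgroup whose normal closure is finitely generated, and some finite-index subgroup of that normal closure is torsion-free abelian and normal in a finite-index subgroup of $G$ — and Lemma~\ref{abel} centralizes it, contradicting $g \in G'$ having infinite order unless $g$ was central to begin with, which still forces $G'$ periodic after handling the central torsion-free part separately. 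Making this last reduction precise is the only real work; everything else is assembling standard facts about finitely generated soluble-by-finite groups.
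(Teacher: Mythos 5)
Your reduction to a finitely generated group via Lemma~\ref{fg} matches the paper, and the idea of exploiting Lemma~\ref{abel} on a torsion-free abelian normal subgroup is also the right engine; but the argument as sketched has a genuine gap at the decisive moment. Lemma~\ref{abel} only yields \emph{centrality}, and a central torsion-free element (or subgroup) inside $G'$ is not by itself a contradiction: a finitely generated nilpotent group of class two, such as the integral Heisenberg group, has an infinite torsion-free derived subgroup that is central, so your final step ``Lemma~\ref{abel} centralizes it, contradicting $g\in G'$ having infinite order unless $g$ was central to begin with'' does not close the argument — the central case is exactly the hard case, and you explicitly leave it ``to be handled separately.'' The paper closes it by taking $A$ to be a \emph{maximal} abelian normal subgroup (containing the last nontrivial term of the derived series of the soluble radical), showing that the quotient $G/A$ is abelian by an induction on the derived length of the soluble radical after factoring out the largest periodic normal subgroup of $G$, and then using that a maximal abelian normal subgroup of a nilpotent group is self-centralizing: since $A\leq Z(G)$ forces $C_G(A)=G$, one gets $G=A$ abelian, the desired contradiction. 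Some device of this kind (or the residual-finiteness route of Lemma~\ref{rf}, which the paper reserves for Theorem~A) is indispensable; mere centrality never suffices.

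Several auxiliary claims in your sketch are also false as stated and would need repair even before that point: the terms of the derived series of a finitely generated soluble group need not be finitely generated (e.g.\ $\mathbb{Z}\wr\mathbb{Z}$), the normal closure of a cyclic subgroup need not be finitely generated, $G$ is soluble-by-finite but in general not nilpotent-by-finite, the torsion of the abelian subgroups you use is not bounded without finite generation, and there is no reason a finite-index subgroup of the normal closure of $\langle g\rangle$ should be torsion-free abelian. Moreover, ``factoring out the torsion'' of a subgroup that is normal only in the finite-index subgroup $S$ does not produce a quotient of $G$; the paper instead factors out the largest periodic normal subgroup of $G$ itself, which is harmless precisely because periodicity (not finiteness) is the goal. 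Note finally that you are aiming at the stronger conclusion that $G'$ is finite; that is true in the end, but the paper obtains it only in Theorem~A via residual finiteness of finitely generated metabelian groups, after this lemma has delivered periodicity — it is not reachable from Lemmas~\ref{abel} and~\ref{fg} alone by the route you describe.
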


\begin{proof}
    Since $G$ verifies the property $\Bar{\mathcal{K}}$, there exists a finitely generated group $N$ such that $N'=G'$. Thus, without loss of generality, we may suppose that $G$ is finitely generated. Assume for a contradiction that the statement is false, and let $T$ be the largest periodic normal subgroup of $G$. Clearly the factor group $G/T$ is also a counterexample, so we may suppose without loss of generality that $G$ has no periodic non-trivial normal subgroups. Among all such counterexamples choose $G$ in such a way that its soluble radical $S$ has minimal derived length $m$. Let $U$ be the smallest non-trivial term of the derived series of $S$, and let $A$ be a maximal abelian normal subgroup of $G$ containing $U$. Clearly, $A$ is torsion-free and so it is contained in $Z(G)$ by Lemma \ref{abel}. If $L/A$ is the largest periodic normal subgroup of $G/A$, the factor group $L/Z(L)$ is locally finite, so that $L'$ is locally finite by Schur's theorem and hence $L'=\{1\}$. Thus, $L$ is abelian, so that $L=A$, and hence $G/A$ has no periodic non-trivial normal subgroups. On the other hand, the soluble radical of $G/A$ has derived length less then $m$, and so the minimal choice of $G$ yields that $G/A$ is abelian. Therefore, $G$ is nilpotent, and hence $A=C_G(A)=G$. This contradiction completes the proof of the Lemma.
\end{proof}

\section{$\bar{\mathcal{K}}$-groups}

The first result of this section shows that locally (soluble-by-finite) $\Bar{\mathcal{K}}$-groups have finite second commutator subgroup. 

\begin{lem}\label{period}
    Let $G$ be a periodic group which satisfies $\Bar{\mathcal{K}}$. Then $G'$ is finitely generated.
\end{lem}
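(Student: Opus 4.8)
The plan is to reduce to the case $G$ finitely generated via Lemma \ref{fg}, to write $G'$ as a join of finitely many subgroups of the form $[\langle x\rangle,G]$, and then to show each of these is finitely generated, using the property $\bar{\mathcal{K}}$ together with periodicity.

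\textbf{Reduction to the finitely generated case.} By Lemma \ref{fg} applied with $H=G$, there is a finitely generated subgroup $N\leq G$ with $N'=G'$; since $N$ is again periodic and inherits $\bar{\mathcal{K}}$, it suffices to prove the statement for $N$, so I may assume $G=\langle g_1,\dots,g_n\rangle$. The key structural remark is that for every $g\in G$ the subgroup $[\langle g\rangle,G]$ is normal in $G$ (this follows from the identity $[g^{i},v]^{w}=[g^{i},w]^{-1}[g^{i},vw]$, which shows every conjugate of a generator of $[\langle g\rangle,G]$ lies again in $[\langle g\rangle,G]$). Putting $N_0:=\langle [\langle g_1\rangle,G],\dots,[\langle g_n\rangle,G]\rangle$, each $g_i$ centralizes $G/N_0$ because $[g_i,v]\in[\langle g_i\rangle,G]\leq N_0$; hence $G/N_0$ is abelian and $G'=N_0$. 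Thus it is enough to prove that $[\langle x\rangle,G]$ is finitely generated for each $x\in G$.

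\textbf{Each $[\langle x\rangle,G]$ is finitely generated.} Since $[x^{i},v]\in[\langle x\rangle,\langle v\rangle]$ for all $i$ and all $v\in G$, we have $[\langle x\rangle,G]=\langle\, [\langle x\rangle,\langle v\rangle] : v\in G\,\rangle$. Applying $\bar{\mathcal{K}}$ with $X=\langle x\rangle$, the set $\{[\langle x\rangle,\langle v\rangle] : v\in G\}$ is finite, so $[\langle x\rangle,G]$ is the join of finitely many subgroups of this form. Now fix $v$: the quotient $\langle x,v\rangle/[\langle x\rangle,\langle v\rangle]$ is generated by the images of $x$ and $v$, which commute (because $[x,v]\in[\langle x\rangle,\langle v\rangle]$), and it is periodic, hence a finitely generated periodic abelian group, hence finite. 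Therefore $[\langle x\rangle,\langle v\rangle]$ has finite index in the finitely generated group $\langle x,v\rangle$, and so it is itself finitely generated. Consequently $[\langle x\rangle,G]$ is a join of finitely many finitely generated subgroups, hence finitely generated, and thus $G'=N_0$ is finitely generated.

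\textbf{Main obstacle.} I do not expect a serious difficulty here; the only points that require care are the initial reduction and the identity $G'=N_0$ in the finitely generated case, i.e.\ checking that the $[\langle g_i\rangle,G]$ are normal and that these finitely many subgroups generate $G'$. The two background facts used — that a finite-index subgroup of a finitely generated group is finitely generated, and that a finitely generated periodic abelian group is finite — are standard, and periodicity enters exactly once, to make the $2$-generated abelian quotient $\langle x,v\rangle/[\langle x\rangle,\langle v\rangle]$ finite.
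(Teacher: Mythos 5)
Your proof is correct and follows essentially the same route as the paper: both reduce $G'$, via applications of $\bar{\mathcal{K}}$ to cyclic subgroups, to a join of finitely many subgroups of the form $[\langle x\rangle,\langle v\rangle]$, and then use periodicity to see that each of these is finitely generated. The only harmless differences are your initial reduction to a finitely generated group via Lemma \ref{fg} (the paper instead applies $\bar{\mathcal{K}}$ with $X=G$ to obtain finitely many subgroups $[\langle g\rangle,G]$ generating $G'$) and your Schreier finite-index argument at the end, where the paper simply notes that $[\langle x\rangle,\langle v\rangle]$ is generated by the finitely many commutators $[x^n,v^m]$ because $x$ and $v$ have finite order.
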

\begin{Proof}
    The derived subgroup of the group $G$ is the subgroup $<[<g>,G]\, |\ g\in G>$ and the set $\{[<g>,G]\, |\,\ g\in G\}$ is finite. Therefore they exist $g_1,...,g_n\in G$ such that $$\{[<g>,G]\, |\,\ g\in G\}=\{[<g_1>,G],...,[<g_t>,G]\}.$$ The subgroup $[<g>,G]$ is equal to $<[<g>,<x>] \, | \, x\in G>$, so, fixed $i$ in the set $\{1,...,t\}$, there exist $x_1^i,...,x_{s_i}^i$ such that $$<[<g>,<x>] \, | \, x\in G>=\{[<g_i>,<x_1^i>],...,[<g_i>,<x_{s_i}^i>]\},$$ where $[<g_i>,<x>]=<[g_i^n,x^m]\, | \, n,m\in \mathbb{Z}>$. $G$ is periodic, and so the subgroup $[<g_i>,<x>]$ is finitely generated for every $x\in G$. Consequentially, $G'$ is finitely generated.
\end{Proof}

\begin{lem}\label{finito}

    Let G be a group. If G is a locally (soluble-by-finite) $\Bar{\mathcal{K}}$-group, then $G''$ is finite. 
\end{lem}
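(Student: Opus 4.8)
The plan is to bootstrap from the lemmas already proved. First I would pass to the commutator subgroup: by Lemma~\ref{per} the subgroup $G'$ is periodic, and since the property $\bar{\mathcal{K}}$ is inherited by subgroups (as noted at the beginning of Section~2), $G'$ is itself a periodic $\bar{\mathcal{K}}$-group. Applying Lemma~\ref{period} with $G'$ in the role of $G$ then yields that $(G')' = G''$ is finitely generated.

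Next I would identify the class to which $G''$ belongs. As a subgroup of the locally (soluble-by-finite) group $G$, the subgroup $G''$ is again locally (soluble-by-finite); being finitely generated, it is therefore soluble-by-finite. Moreover $G'' \le G'$, so $G''$ is periodic. Hence the lemma reduces to the well-known fact that a finitely generated, periodic, soluble-by-finite group is finite. I would prove this directly: choose a soluble normal subgroup $S$ of finite index in $G''$; then $S$ is finitely generated (a finite-index subgroup of a finitely generated group) and periodic, and an induction on the derived length of $S$ finishes the job, since a finitely generated periodic abelian group is finite, so in the inductive step $S/S'$ is finite, whence $S'$ has finite index in $S$, is finitely generated, periodic and of smaller derived length. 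Thus $S$ is finite, and $G''$, being finite-by-finite, is finite.

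As an alternative to invoking Lemma~\ref{per}, one could first use Lemma~\ref{fg} to replace $G$ by a finitely generated subgroup $N$ with $N' = G'$ (so that $N'' = G''$), observe that a finitely generated locally (soluble-by-finite) group is soluble-by-finite, and then run essentially the same argument inside $N$; but the route through Lemmas~\ref{per} and~\ref{period} seems the most economical.

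The main obstacle is not a deep one: the whole argument is an assembly of the earlier lemmas together with elementary closure properties, and the only external input is the classical finiteness of finitely generated periodic soluble(-by-finite) groups, which should either be cited or included via the short induction sketched above. The point that genuinely needs care is checking that the hypotheses of Lemma~\ref{period} are met by $G'$ — namely that $G'$ inherits $\bar{\mathcal{K}}$ and is periodic — which is precisely what Lemma~\ref{per} and the opening remarks of Section~2 provide.
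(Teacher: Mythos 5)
Your proposal is correct and follows essentially the same route as the paper: the paper's proof simply combines Lemma~\ref{per} and Lemma~\ref{period} to conclude that $G''$ is periodic and finitely generated, hence finite. The only difference is that you spell out the final step (that a finitely generated periodic group which is locally (soluble-by-finite) is finite), which the paper leaves implicit; this is a worthwhile clarification, since finite generation plus periodicity alone would not suffice.
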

\begin{proof}
    By Lemmas \ref{per} and \ref{period} $G''$ is finitely generated and periodic, therefore it is finite. 
\end{proof}

\begin{lem}\label{rf}
    Let $G$ be a residually finite $\bar{\mathcal{K}}_{\infty}$-group. Then $G'$ is finite.
\end{lem}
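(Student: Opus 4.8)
The plan is to use residual finiteness to pin down a central subgroup of finite index in $G$, and then to conclude by Schur's theorem. If $G$ is finite there is nothing to prove, so assume $G$ is infinite. Then every subgroup of finite index of $G$ is itself infinite, and, since $G$ is residually finite, the intersection of the family $\mathcal{F}$ of all normal subgroups of finite index of $G$ is trivial.

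The main step is to apply the property $\bar{\mathcal{K}}_{\infty}$ to the subgroup $X=G$. This says that the set $\{[G,H]\ |\ H\leq G,\ H\text{ infinite}\}$ is finite; in particular, as $N$ runs over the members of $\mathcal{F}$ (all of which are infinite), the commutator subgroups $[G,N]$ take only finitely many values, say $[G,N_1],\dots,[G,N_m]$ is a complete list of them, with $N_1,\dots,N_m\in\mathcal{F}$. Since $[G,N]\leq N$ for every $N\in\mathcal{F}$, we obtain
\[
 [G,N_1]\cap\cdots\cap[G,N_m]=\bigcap_{N\in\mathcal{F}}[G,N]\leq\bigcap_{N\in\mathcal{F}}N=\{1\}.
\]
Now set $N_0=N_1\cap\cdots\cap N_m$, which again belongs to $\mathcal{F}$. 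Then $[G,N_0]\leq[G,N_i]$ for each $i$, hence $[G,N_0]\leq[G,N_1]\cap\cdots\cap[G,N_m]=\{1\}$, so that $N_0$ is a central subgroup of finite index of $G$.

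Therefore $Z(G)$ has finite index in $G$, and $G'$ is finite by Schur's theorem. I do not expect a genuine obstacle here: the only points that need a moment's care are the observation that in an infinite residually finite group the finite-index normal subgroups are all infinite (so that $\bar{\mathcal{K}}_{\infty}$ really does apply to them), and the elementary but crucial remark that $\bigcap_{N\in\mathcal{F}}[G,N]$ is actually a \emph{finite} intersection, which is precisely what makes $N_0$ still have finite index.
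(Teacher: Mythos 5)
Your proof is correct and follows essentially the same route as the paper: apply $\bar{\mathcal{K}}_{\infty}$ with $X=G$ to the family of (infinite) normal subgroups of finite index, observe that their commutator subgroups $[G,N]$ take only finitely many values and intersect trivially by residual finiteness, so that a finite subintersection $N_0$ is central of finite index, and conclude that $G'$ is finite by Schur's theorem. The only difference is cosmetic: you make explicit the remarks (finite-index subgroups of an infinite group are infinite, and the infinite intersection reduces to a finite one) that the paper leaves implicit.
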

\begin{proof}
    Assume that $G$ is infinite. Let $\mathcal{L}$ be the set of all normal subgroups of finite index of $G$. There exists a finite subset $\mathcal{F}$ of $\mathcal{L}$ such that $$\{[G,H] \,|\, H\in \mathcal{L}\}=\{[G,H] \,|\, H\in \mathcal{F}\}.$$ Then $$[G,\bigcap_{H\in \mathcal{F}}H]\leq \bigcap_{H\in \mathcal{F}} [G,H]=\bigcap_{H\in \mathcal{L}} [G,H]\leq \bigcap_{H\in \mathcal{L}} H=\{1\}$$ and hence $\bigcap_{H\in \mathcal{F}}H$ is contained in the center $Z(G)$. As the index $$|G:\bigcap_{H\in \mathcal{F}}H|$$ is finite, it follows that $G'$ is finite.
\end{proof}
\medskip

We are ready to prove the main result of this section.

\medskip

\noindent \textbf{Theorem A}

\noindent A group $G$ is finite-by-abelian if and only if it is locally (soluble-by-finite) and has the property $\bar{\mathcal{K}}$.

\begin{proof}
    
The necessary condition is obviously verified. Vice versa, assume that $G$ is a locally (soluble-by-finite) $\bar{\mathcal{K}}$-group and observe that $G''$ is finite by Lemma \ref{finito}, so that, replacing $G$ by $G/G''$ it is possible to suppose that $G$ is metabelian. As $G$ satisfies $\Bar{\mathcal{K}}$, there exists a finitely generated subgroup $N$ of $G$ such that $N'=G'$. So, without loss of generality, it is possible to assume that $G$ is finitely generated. It follows that $G$ is residually finite, and so $G'$ is finite by Lemma \ref{rf}.

\end{proof}

We conclude this section with a result on perfect $\Bar{\mathcal{K}}$-groups.

\begin{teor}
    Let $G$ be a perfect group with the property $\Bar{\mathcal{K}}$. Then $G$ is finitely generated.
\end{teor}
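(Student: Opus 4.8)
\medskip

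The plan is to observe that the statement is an immediate consequence of Lemma \ref{fg}. Apply that lemma to the subgroup $H=G$ itself: it produces a finitely generated subgroup $N$ of $G$ with $N'=G'$. Since $G$ is perfect we have $G'=G$, so $N'=G$; but $N'\le N\le G$, and hence $N=G$. As $N$ is finitely generated, so is $G$.

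Thus all the real content is already packaged in Lemma \ref{fg}, whose proof uses the property $\bar{\mathcal{K}}$ twice: first to write $G'=G$ as the join of finitely many subgroups of the form $[\langle h\rangle,G]$ with $h\in G$, and then to write each such subgroup as the join of finitely many subgroups $[\langle h\rangle,\langle k\rangle]\le\langle h,k\rangle'$. Collecting the finitely many elements $h$ and $k$ that occur yields a finitely generated subgroup $N$ of $G$ with $N'\ge G'=G$, and therefore $N=G$. I do not expect any obstacle here beyond correctly invoking this lemma; in particular no additional hypothesis (periodicity, solubility-by-finiteness, residual finiteness) is needed, since we never have to pass from "$G'$ finitely generated" to "$G'$ finite".
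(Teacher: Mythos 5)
Your proposal is correct and is exactly the paper's argument: apply Lemma \ref{fg} to $H=G$ to get a finitely generated $N\leq G$ with $N'=G'=G$, whence $N=G$. Nothing further is needed.
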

\begin{proof}
    We see from Lemma \ref{fg} that there exists a finitely generated subgroup $N$ such that $G=G'=N'$ and hence $G=N$ is finitely generated.  
\end{proof}

\section{$\bar{\mathcal{K}}_{\infty}$-groups}

\begin{lem}\label{centro}
    Let $G$ be a $\bar{\mathcal{K}}_{\infty}$-group. If $Z(G)$ is infinite, then $G$ satisfies $\bar{\mathcal{K}}$.
\end{lem}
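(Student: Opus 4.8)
The plan is to exploit the $\bar{\mathcal{K}}_{\infty}$ hypothesis directly by replacing an arbitrary subgroup with an infinite one that has the same commutator with $X$. Fix a subgroup $X$ of $G$; we must show that the set $\{[X,H]\mid H\leq G\}$ is finite, knowing only that the subset coming from infinite subgroups is finite. The idea is that, since $Z(G)$ is infinite, for every subgroup $H$ of $G$ the product $HZ(G)$ is again a subgroup (as $Z(G)\norm G$) and is infinite, and I claim $[X,HZ(G)]=[X,H]$.

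To prove that claim I would argue at the level of generators. The subgroup $[X,HZ(G)]$ is generated by the elements $[x,hz]$ with $x\in X$, $h\in H$, $z\in Z(G)$. Using the identity $[x,hz]=[x,z]\,[x,h]^{z}$ together with $[x,z]=1$ and $[x,h]^{z}=[x,h]$, which hold because $z$ is central, each such generator equals $[x,h]$. Hence the generating set of $[X,HZ(G)]$ coincides with that of $[X,H]$, so the two subgroups are equal.

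Granting this, the conclusion is immediate: for every $H\leq G$ we have $[X,H]=[X,HZ(G)]$ with $HZ(G)$ infinite, so
$$\{[X,H]\mid H\leq G\}\subseteq\{[X,K]\mid K\leq G,\ K\ \text{infinite}\},$$
and the right-hand side is finite by the property $\bar{\mathcal{K}}_{\infty}$. Since $X$ was arbitrary, $G$ satisfies $\bar{\mathcal{K}}$.

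There is essentially no serious obstacle here; the only points requiring care are the verification of the commutator identity under the correct commutator convention and the observation that $HZ(G)$ is indeed a subgroup and is infinite — both routine, the latter because it contains the infinite subgroup $Z(G)$.
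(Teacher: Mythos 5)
Your proof is correct and is essentially the paper's own argument: both reduce $[X,H]$ to $[X,HZ(G)]$ with $HZ(G)$ infinite and then invoke $\bar{\mathcal{K}}_{\infty}$; you merely spell out the commutator identity $[x,hz]=[x,z][x,h]^{z}=[x,h]$ that the paper leaves implicit.
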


\begin{proof}
    Let $X$ be a subgroup of $G$, then $[X,H]=[X,HZ(G)]$ for every subgroup $H$ of $G$, and hence the set $$\{[X,H]\,|\, H\leq G\}=\{[X,HZ(G)]\,|\, H\leq G\}$$ is finite. Therefore, $G$ is a $\bar{\mathcal{K}}$-group.
\end{proof}

\begin{cor}\label{cenk}
    Let $G$ be a locally (soluble-by-finite) $\bar{\mathcal{K}}_{\infty}$-group. If $Z(G)$ is infinite, then $G'$ is finite.
\end{cor}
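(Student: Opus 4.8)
The plan is to combine Lemma~\ref{centro} with Theorem~A. Suppose $G$ is a locally (soluble-by-finite) $\bar{\mathcal{K}}_{\infty}$-group with $Z(G)$ infinite. By Lemma~\ref{centro}, the infiniteness of $Z(G)$ upgrades the property $\bar{\mathcal{K}}_{\infty}$ to the full property $\bar{\mathcal{K}}$; that is, $G$ is a locally (soluble-by-finite) $\bar{\mathcal{K}}$-group. Theorem~A then applies directly and gives that $G$ is finite-by-abelian, so in particular $G'$ is finite.

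There is essentially no obstacle here: the corollary is an immediate consequence of the lemma it follows and of Theorem~A. The only point worth spelling out is that the hypothesis ``locally (soluble-by-finite)'' is needed precisely because Theorem~A requires it, whereas Lemma~\ref{centro} holds for arbitrary $\bar{\mathcal{K}}_{\infty}$-groups; so the reduction via Lemma~\ref{centro} does not lose the structural hypothesis, and the two results chain together cleanly. Thus the whole proof is a two-line deduction: apply Lemma~\ref{centro}, then apply Theorem~A.

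\begin{proof}
    By Lemma~\ref{centro}, since $Z(G)$ is infinite, $G$ satisfies $\bar{\mathcal{K}}$. Hence $G$ is a locally (soluble-by-finite) $\bar{\mathcal{K}}$-group, and Theorem~A yields that $G$ is finite-by-abelian; in particular, $G'$ is finite.
\end{proof}
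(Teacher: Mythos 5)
Your proof is correct and follows exactly the paper's argument: Lemma~\ref{centro} upgrades $\bar{\mathcal{K}}_{\infty}$ to $\bar{\mathcal{K}}$ when $Z(G)$ is infinite, and Theorem~A then gives the finiteness of $G'$. Nothing is missing.
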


\begin{proof}
    The result is an immediate consequence of Lemma \ref{centro} and Theorem A.
\end{proof}

As a consequence of Lemmas \ref{abel} and \ref{centro}, it turns out that for groups containing an infinite torsion-free abelian normal subgroup the properties $\bar{\mathcal{K}}$ and $\bar{\mathcal{K}}_{\infty}$ are equivalent.

\begin{lem}
    Let $G$ be a soluble-by-finite $\bar{\mathcal{K}}_{\infty}$-group and let $N$ be an infinite normal subgroup of $G$. Then $G/N$ has finite commutator subgroup.
\end{lem}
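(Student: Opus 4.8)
The plan is to deduce this lemma directly from Theorem A. First I would observe that the class of soluble-by-finite groups is closed under passing to homomorphic images, so $G/N$ is again soluble-by-finite; in particular $G/N$ is locally (soluble-by-finite).

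Next I would check that $G/N$ has the property $\bar{\mathcal{K}}$, which is precisely the remark recorded at the beginning of Section 2. Every subgroup of $G/N$ has the form $H/N$ for some subgroup $H$ of $G$ with $N\leq H$; since $N$ is infinite, each such $H$ is infinite. For a fixed subgroup $X/N$ of $G/N$ (so $N\leq X\leq G$) one has $[X/N,H/N]=[X,H]N/N$, and therefore $\{[X/N,H/N]\mid H/N\leq G/N\}$ is the image of the set $\{[X,H]\mid N\leq H\leq G\}$ under the map $Y\mapsto YN/N$. The latter set is contained in $\{[X,H]\mid H\leq G,\ H\text{ infinite}\}$, which is finite because $G$ is a $\bar{\mathcal{K}}_{\infty}$-group; hence $\{[X/N,H/N]\mid H/N\leq G/N\}$ is finite, and $G/N$ is a $\bar{\mathcal{K}}$-group.

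Finally, applying Theorem A to $G/N$, which has now been shown to be locally (soluble-by-finite) and to satisfy $\bar{\mathcal{K}}$, we conclude that $G/N$ is finite-by-abelian, and in particular $(G/N)'$ is finite, as required.

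There is no real obstacle here: the statement is essentially a corollary obtained by combining the opening observation of Section 2 with Theorem A. The only point worth a word of care is the role of the hypothesis that $N$ is infinite: it ensures that the subgroups $H$ that occur when computing commutator subgroups in $G/N$ all contain $N$ and are hence infinite, so that the weaker property $\bar{\mathcal{K}}_{\infty}$ on $G$ already suffices to force $\bar{\mathcal{K}}$ on $G/N$.
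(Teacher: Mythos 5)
Your proof is correct and follows essentially the same route as the paper: the paper simply cites the opening remark of Section~2 (that quotients of $\bar{\mathcal{K}}_{\infty}$-groups by infinite normal subgroups are $\bar{\mathcal{K}}$-groups) and then applies Theorem A, which is exactly what you do, only with the verification of that remark written out in detail.
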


\begin{proof}
    The factor group $G/N$ has the property $\bar{\mathcal{K}}$, and hence it has finite commutator subgroup by Theorem A.
\end{proof}

\begin{cor}\label{nilp}
    Let $G$ be an infinite nilpotent group with the property $\bar{\mathcal{K}}_{\infty}$. Then $G'$ is finite.
\end{cor}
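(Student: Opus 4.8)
The plan is to prove the statement by induction on the nilpotency class $c$ of $G$. The base case $c\le 1$ is immediate, since then $G$ is abelian and $G'=\{1\}$ is finite. For the inductive step I would distinguish two cases according to whether the centre $Z(G)$ is infinite or finite.

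If $Z(G)$ is infinite, no new work is needed: a nilpotent group is soluble, hence in particular locally (soluble-by-finite), so $G$ satisfies the hypotheses of Corollary~\ref{cenk}, which gives directly that $G'$ is finite. If instead $Z(G)$ is finite, I would pass to the factor group $\bar G=G/Z(G)$. Since $G$ is infinite while $Z(G)$ is finite, $\bar G$ is still infinite; moreover $\bar G$ is nilpotent of class $c-1$ and, being a homomorphic image of $G$, it inherits the property $\bar{\mathcal{K}}_{\infty}$. The inductive hypothesis then applies to $\bar G$ and yields that $\bar G'=G'Z(G)/Z(G)$ is finite; since $Z(G)$ is finite, this forces $G'$ to be finite as well, completing the induction.

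I do not anticipate any serious obstacle. The only subtlety worth flagging is the choice of the induction parameter together with the dichotomy on $Z(G)$: when $Z(G)$ is finite, the passage to $G/Z(G)$ simultaneously lowers the nilpotency class and preserves infiniteness, which is exactly what is needed for the inductive hypothesis to be applicable, while the infinite-centre case is disposed of by Corollary~\ref{cenk} (and, through it, by Theorem~A). The remaining verifications — that $\bar{\mathcal{K}}_{\infty}$ passes to quotients (already observed in the paper) and that $|G'|\le |G'Z(G)|=|\bar G'|\,|Z(G)|<\infty$ — are routine.
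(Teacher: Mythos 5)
Your proof is correct and essentially the paper's own argument in a different packaging: iteratively factoring out a finite centre in your induction on the nilpotency class amounts to passing to $G/Z_{i-1}(G)$ for the first $i$ with $Z_i(G)$ infinite, which is exactly what the paper does before invoking Lemma~\ref{centro} and Theorem~A (your Corollary~\ref{cenk}). No gaps; only note that $G/Z(G)$ has class \emph{at most} $c-1$, which is all the induction needs.
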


\begin{proof}
    Let $i$ be the smallest positive integer such that the $i$-th term $Z_i(G)$ of the upper central series of $G$ is infinite. Then $G/Z_{i-1}(G)$ is a $\bar{\mathcal{K}}_{\infty}$-group with infinite center, and hence it has the property $\bar{\mathcal{K}}$ by Lemma \ref{centro}. It follows from Theorem A that $\frac{G'Z_{i-1}(G)}{Z_{i-1}(G)}$ is finite, so $G'$ is finite since $Z_{i-1}(G)$ is finite.
\end{proof}

\begin{cor}\label{meta}

    Let $G$ be a metabelian non-periodic $\bar{\mathcal{K}}_{\infty}$-group. Then $G'$ is finite.
\end{cor}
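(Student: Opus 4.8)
The plan is to argue by contradiction: assuming $G'$ is infinite I want to produce, for a suitable subgroup $X$, infinitely many distinct subgroups $[X,H]$ with $H$ infinite. Write $A:=G'$, an abelian normal subgroup since $G$ is metabelian. First I would check that $A$ is periodic: if $T$ is the torsion subgroup of $A$ and $A/T\neq 1$, then $A/T$ is an infinite torsion-free abelian normal subgroup of $G/T$, so by Lemmas \ref{abel} and \ref{centro} the group $G/T$ has the property $\bar{\mathcal{K}}$, whence $(G/T)'=A/T$ is finite by Theorem A --- impossible. Hence $A=T$ is periodic, and so $Q:=G/A$ is a non-periodic abelian group; I fix an element $z\in G$ whose image in $Q$ has infinite order, so that $z$ has infinite order and $\langle z\rangle\cap A=1$. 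For $g\in G$ let $\sigma_g$ be the automorphism of $A$ induced by conjugation; since $A$ is abelian, $[A,g]=(\sigma_g-1)A$ is a subgroup of $A$ depending only on $gA$.

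The crucial preliminary claim is that $[A,g]$ is finite whenever $g$ has infinite order. Applying $\bar{\mathcal{K}}_{\infty}$ with $X=\langle g\rangle$: for each $a\in A$ the subgroup $\langle g,a\rangle$ is infinite and $[\langle g\rangle,\langle g,a\rangle]=(\sigma_g-1)M_a$, where $M_a$ is the subgroup of $A$ generated by the conjugates of $a$ under $\langle g\rangle$; thus $\{(\sigma_g-1)M_a:a\in A\}$ is finite. Since $[A,g]=\bigcup_{a}(\sigma_g-1)M_a$, it is a union of finitely many subgroups, so by Neumann's covering lemma one of them, say $(\sigma_g-1)M_{a_0}$, has finite index in $[A,g]$. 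Now $\langle g,a_0\rangle=M_{a_0}\langle g\rangle$ is a finitely generated metabelian group, hence residually finite, and it satisfies $\bar{\mathcal{K}}_{\infty}$; Lemma \ref{rf} gives that its commutator subgroup $(\sigma_g-1)M_{a_0}$ is finite, and therefore $[A,g]$ is finite.

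Next I would find a finite-order element acting badly on $A$. The set $S=\{[A,g]:g\in G\}$ is finite, because $[A,\langle g\rangle]=[A,\langle g\rangle A]$ and $\langle g\rangle A\supseteq A$ is infinite, so $S\subseteq\{[A,H]:H\leq G\text{ infinite}\}$. Also $A/[A,G]\leq Z(G/[A,G])$, so if this factor were infinite Corollary \ref{cenk} would force $(G/[A,G])'=A/[A,G]$ to be finite; hence $A/[A,G]$ is finite and, as $A$ is infinite, $[A,G]$ is infinite. Were every $[A,g]$ finite, $[A,G]=\sum_{M\in S}M$ would be a finite sum of finite subgroups, hence finite --- a contradiction. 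So some $[A,g_0]$ is infinite; by the previous paragraph $g_0$ has finite order, whence $\sigma_0:=\sigma_{g_0}$ has finite order, and $B:=[A,g_0]=(\sigma_0-1)A$ is infinite.

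For the contradiction, put $\tau:=\sigma_z$ and $c:=[g_0,z]\in A$. Since $z$ has infinite order, $[A,z]=(\tau-1)A$ is finite by the crucial claim; as $\tau^{\ell}a-a\in(\tau-1)A$ for every $a$, all $\tau$-orbits in $A$ are finite, while $\sigma_0$ has finite order and $\sigma_0,\tau$ commute ($Q$ is abelian). Hence the $\langle\sigma_0,\tau\rangle$-orbit of any element of $A$ generates a finite subgroup. For $b\in A$ set $H_b:=\langle z,g_0,b\rangle$, an infinite subgroup. A direct commutator computation yields $[\langle g_0\rangle,H_b]=J+(\sigma_0-1)N_b$, where $J$ is generated by the $\langle\sigma_0,\tau\rangle$-orbit of $c$ (finite and independent of $b$) and $N_b$ by the $\langle\sigma_0,\tau\rangle$-orbit of $b$ (finite); so each $[\langle g_0\rangle,H_b]$ is a finite subgroup. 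But $\bigcup_{b\in A}[\langle g_0\rangle,H_b]\supseteq\bigcup_{b}(\sigma_0-1)N_b\supseteq\{(\sigma_0-1)b:b\in A\}=B$, which is infinite; so $\{[\langle g_0\rangle,H_b]:b\in A\}$ is infinite, contradicting $\bar{\mathcal{K}}_{\infty}$ applied with $X=\langle g_0\rangle$. The delicate point --- and the reason the "large centre" arguments of the earlier results do not suffice here --- is that $A=G'$ lies inside the torsion part of $G$, so one must play off the infinite-order element $z$ (which acts almost trivially on $A$) against the finite-order element $g_0$ (which does not) in order to manufacture infinitely many distinct commutator subgroups among infinite subgroups.
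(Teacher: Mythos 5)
Your argument is correct, but it takes a genuinely different and much longer route than the paper. The paper's proof is a short direct one: fix a single element $a$ of infinite order; every commutator $[x,y]$ lies in $[\langle x,a\rangle,\langle y,a\rangle]\leq\langle x,y,a\rangle'$, which is finite because a finitely generated metabelian group is residually finite and Lemma \ref{rf} applies; then two applications of $\bar{\mathcal{K}}_{\infty}$ (first varying $y$, then varying $x$) show that $G'$ is generated by finitely many of these finite subgroups, and since $G'$ is abelian it is finite. You instead argue by contradiction: you first force $A=G'$ to be periodic via Lemmas \ref{abel}, \ref{centro} and Theorem A, prove the ``crucial claim'' that $[A,g]$ is finite for every infinite-order $g$ (essentially the same residual-finiteness mechanism, packaged through the decomposition $[A,g]=\bigcup_a(\sigma_g-1)M_a$), locate a finite-order $g_0$ with $[A,g_0]$ infinite using Corollary \ref{cenk}, and then manufacture infinitely many distinct finite subgroups $[\langle g_0\rangle,H_b]$ with $H_b=\langle z,g_0,b\rangle$ infinite; I checked the identity $[\langle g_0\rangle,H_b]=J+(\sigma_0-1)N_b$ and it does hold (and in any case only finiteness of $[\langle g_0\rangle,H_b]$ together with $[g_0,b]\in[\langle g_0\rangle,H_b]$ is needed). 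What each approach buys: the paper's proof is self-contained at this point of the development (it uses only Lemma \ref{rf} and P. Hall's theorem) and avoids any torsion analysis, while yours yields extra structural information along the way (periodicity of $G'$, finiteness of $[A,g]$ for infinite-order $g$, the interplay between $z$ and $g_0$) at the cost of invoking Theorem A, Corollary \ref{cenk} and B. H. Neumann's covering lemma. Note also that the appeal to Neumann's covering lemma is unnecessary: each $(\sigma_g-1)M_a$ is itself the derived subgroup of the finitely generated metabelian group $\langle g,a\rangle$, hence finite by Lemma \ref{rf}, so $[A,g]$, being an abelian group generated by finitely many finite subgroups, is finite directly.
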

\begin{proof}
    Let $a$ be an element of infinite order of $G$. If $x$ and $y$ are element of $G$, the subgroup $<x,y,a>$ is residually finite, and hence $<x,y,a>'$ is finite. On the other hand, there exist finitely many elements $y_1,...,y_k,x_1^1,...,x^1_{s_1},...,x^k_1,...,x^k_{s_k}$ such that $$\{[<x,a>,<y,a>]\,|\, y\in G\}=\{[<x,a>,<y_1,a>],...,[<x,a>,<y_k,a>]\}$$ and for each $i\in \{1,...,k\}$ $$\{[<x,a>,<y_i,a>]\,|\, x\in G\}=\{[<x_1^i,a>,<y_i,a>],...,[<x^i_{s_i},a>,<y_i,a>]\}.$$ Therefore $G'=<[<x,a>,<y,a>] \,|\, x,y\in G>$ is finite.

\end{proof}

\begin{lem}\label{4.5}
    Let $G$ be a $\bar{\mathcal{K}}_{\infty}$-group and let $A$ be an abelian normal subgroup of $G$ of prime exponent $p$. If $g$ is any element of finite order of $G$, the subgroup $[g,A]$ is finite.
\end{lem}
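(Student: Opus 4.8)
The plan is to argue by contradiction. Suppose $[g,A]$ is infinite; then I will exhibit infinitely many pairwise distinct subgroups of the form $[\langle g\rangle,H]$ with $H$ an infinite subgroup of $G$, contradicting $\bar{\mathcal{K}}_{\infty}$ (used with $X=\langle g\rangle$). Since $A$ is a normal elementary abelian $p$-subgroup, conjugation turns $A$ into a module over the finite group algebra $R:=\mathbb{F}_{p}[\langle g\rangle]$, and I identify the $\langle g\rangle$-invariant subgroups of $A$ with the $R$-submodules. Two elementary remarks are used throughout: if $H$ is a $\langle g\rangle$-invariant subgroup of $A$ then $[\langle g\rangle,H]=[g,H]$, and this equals the image of the $R$-endomorphism $a\mapsto[g,a]$ of $A$ restricted to $H$ (so it is again $\langle g\rangle$-invariant, and $[g,A]\le A$); and $[g,H_{1}H_{2}]=[g,H_{1}][g,H_{2}]$ for subgroups $H_{1},H_{2}$ of $A$.

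First I would record the structure of the $\langle g\rangle$-action. Let $A=L_{0}\ge L_{1}\ge L_{2}\ge\cdots$ with $L_{i+1}=[g,L_{i}]$; since $R$ is a finite ring this chain stabilises, say at $L^{*}$, and then $[g,L^{*}]=L^{*}$. Writing $|g|=p^{a}m$ with $p\nmid m$ and using the primary decomposition $R\cong R_{1}\times R'$ with $R_{1}=\mathbb{F}_{p}[x]/(x-1)^{p^{a}}$ and $x-1$ a unit of $R'$, one obtains a decomposition $A=U\oplus V$ with $g-1$ nilpotent on $U$ (in fact $U=C_{A}(g^{p^{a}})$) and invertible on $V$, the inverse of $g-1$ on $V$ being again a polynomial in $g$. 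Since a nilpotent endomorphism that acts surjectively on a module annihilates it, projecting the identity $[g,L^{*}]=L^{*}$ to $U$ shows that the $U$-component of $L^{*}$ is trivial, so $L^{*}\le V$; hence $g-1$ restricts to an invertible operator on $L^{*}$ whose inverse is a polynomial in $g$, and therefore $[g,H]=H$ for every $\langle g\rangle$-invariant subgroup $H$ of $L^{*}$.

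Now I would split according to the cardinality of $L^{*}$. If $L^{*}$ is infinite, then its semisimple quotient $L^{*}/\mathrm{rad}(L^{*})$ is infinite as well — otherwise every layer $\mathrm{rad}^{i}L^{*}/\mathrm{rad}^{i+1}L^{*}$ would be finite and, $\mathrm{rad}(R)$ being nilpotent, $L^{*}$ would be finite. Thus $L^{*}/\mathrm{rad}(L^{*})$ is an infinite semisimple module, hence a direct sum of infinitely many finite simple modules, and pulling back the obvious descending chain of submodules one gets pairwise distinct infinite $\langle g\rangle$-invariant subgroups $H_{1}>H_{2}>\cdots$ of $L^{*}$. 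Since $[\langle g\rangle,H_{k}]=[g,H_{k}]=H_{k}$, these provide infinitely many distinct values of $[\langle g\rangle,H]$ — a contradiction. If instead $L^{*}$ is finite, then $A$ is infinite (it contains $[g,A]$), so some $k\ge1$ has $B^{*}:=L_{k}$ infinite while $[g,B^{*}]=L_{k+1}$ is finite; then $C:=C_{B^{*}}(g)$ has finite index in $B^{*}$ (because $B^{*}/C\cong[g,B^{*}]$), hence is infinite. For every $a\in A$ the subgroup $D_{a}:=C\langle a\rangle^{\langle g\rangle}$ is an infinite $\langle g\rangle$-invariant subgroup of $A$ with $[\langle g\rangle,D_{a}]=[g,D_{a}]=[g,C]\,[g,\langle a\rangle^{\langle g\rangle}]=[g,\langle a\rangle^{\langle g\rangle}]$. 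As the finite subgroups $[g,\langle a\rangle^{\langle g\rangle}]$ ($a\in A$) together generate the infinite group $[g,A]$, infinitely many of them are distinct, so again $\{[\langle g\rangle,H]:H\le G,\ H\ \text{infinite}\}$ is infinite — a contradiction. Hence $[g,A]$ is finite.

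The step I expect to be the main obstacle is the case $L^{*}$ infinite, which requires two things: that $g-1$ act \emph{invertibly} on $L^{*}$ (so that distinct invariant subgroups of $L^{*}$ are not identified upon passing to $[g,-]$), and that $L^{*}$ contain infinitely many infinite invariant subgroups. The first point is where the primary decomposition of $\mathbb{F}_{p}[\langle g\rangle]$ and the observation that a nilpotent surjective endomorphism is zero come in; the second is the fact that an infinite module over a finite ring has an infinite semisimple quotient. By contrast the case $L^{*}$ finite is a soft argument once an infinite term $B^{*}$ of the chain with $[g,B^{*}]$ finite has been located.
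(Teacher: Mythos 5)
Your proof is correct, but it follows a genuinely different route from the paper's. The paper (implicitly) reduces to an element $g$ of prime--power order $q^{n}$ and then splits on whether $q=p$: for $q=p$ it quotes the nilpotency of $A\langle g\rangle$ and applies Corollary \ref{nilp} (hence, indirectly, Lemma \ref{centro} and Theorem A), while for $q\neq p$ it uses coprimality to write $A$ as a direct product of finite irreducible $\langle g\rangle$-invariant subgroups and obtains the contradiction from the infinitely many infinite subproducts supported on non-centralized components. You instead treat an arbitrary element of finite order at once, via the Fitting-type decomposition $A=U\oplus V$ of the $\mathbb{F}_{p}[\langle g\rangle]$-module $A$ with respect to $g-1$: your case ``$L^{*}=V$ infinite'' (semisimple quotient of an infinite module over a finite ring is infinite, pull back a descending chain of infinite submodules on which $[\langle g\rangle,H]=H$) is the analogue of the paper's $q\neq p$ case but works without any coprimality assumption, and your case ``$L^{*}$ finite'' (pad the finite subgroups $[g,\langle a\rangle^{\langle g\rangle}]$ by the infinite centralizer $C_{B^{*}}(g)$ to make the test subgroups infinite) replaces the paper's appeal to Corollary \ref{nilp} entirely. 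What each approach buys: the paper's argument is shorter because it leans on results already established in the article and on a quotable lemma from Robinson, whereas yours is self-contained (no use of Theorem A or Corollary \ref{nilp}) and avoids the unstated reduction to prime-power order, at the price of the ring-theoretic bookkeeping (primary decomposition of $\mathbb{F}_{p}[\langle g\rangle]$, radical layers). One small point of presentation: the bare claim that the chain $L_{0}\ge L_{1}\ge\cdots$ stabilises ``since $R$ is a finite ring'' is too quick as stated (descending chains of submodules over a finite ring need not terminate); it is, however, immediate from your own decomposition, since $L_{i}=(g-1)^{i}U\oplus V=V$ for $i\ge p^{a}$, so you should justify the stabilisation that way (or via the eventual idempotency of powers of $g-1$ in the finite ring $R$).
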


\begin{proof}
    Assume for a contradiction that the statement is false, so there exists an element $g$ of $G$ with prime-power order $q^n$ such that the subgroup $[g,A]$ is infinite. If $q=p$, the subgroup $<g,A>$ is nilpotent (see [\cite{robinson}, Part 2, Lemma 6.34]), so that, by Corollary \ref{nilp}, $<A,g>'$ is finite, and hence $[g,A]$ is finite. This contradiction shows that $q\neq p$. Then $A$ is a completely reducible $<g>$-module, and so $$A=\underset{i \in I}{\mathrm{Dr}} \, A_i$$ where each $A_i$ is a finite $<g>$-invariant subgroup of $A$ on which $g$ acts irreducibly. Let $I'$ be the subset of $I$ consisting of all elements $i$ such that $[<g>,A_i]=[g,A_i]\neq \{1\}$. Then $I'$ is infinite and $[<g>,A_i]=[g,A_i]=A_i$ for each $i\in I'$. It follows that $$[<g>,<A_j\,|\, j\in J>]=\underset{j \in J}{\mathrm{Dr}} \, A_j$$ for every subset $J$ of $I'$. On the other hand, $I'$ contains infinitely many infinite subsets, contradicting the condition $\bar{\mathcal{K}}_{\infty}$.
\end{proof}

\begin{lem}\label{4.7}
    Let $G$ be a soluble group with no non-trivial periodic divisible abelian normal subgroups. If $G$ has the property $\bar{\mathcal{K}}_{\infty}$, then $G'$ is finite.
\end{lem}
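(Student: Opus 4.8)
The plan is to combine the earlier corollaries with Lemma~\ref{4.5} through a reduction to the case of an elementary abelian normal subgroup, and to run an induction on the derived length. First, by Corollary~\ref{cenk} we may assume $Z(G)$ is finite (otherwise $G'$ is already finite), and then Lemma~\ref{abel} shows that $G$ has no non-trivial torsion-free abelian normal subgroup. Assume for a contradiction that $G'$ is infinite. The core of the argument is the observation: \emph{if $G$ contains an infinite abelian normal subgroup $A$ of prime exponent, then $G'$ is finite.} Indeed, for $g$ of finite order $[g,A]$ is finite by Lemma~\ref{4.5}, while for $g$ of infinite order $\langle g,A\rangle$ is metabelian and non-periodic, so $[g,A]\leq\langle g,A\rangle'$ is finite by Corollary~\ref{meta}; applying $\bar{\mathcal K}_\infty$ with $X=A$ to the infinite subgroups $\langle g,A\rangle$ $(g\in G)$ shows that $\{[A,g]\mid g\in G\}$ is a finite set of finite subgroups, so $[G,A]=\langle[A,g]\mid g\in G\rangle$ is finite; then the image of $A$ in $G/[G,A]$ is an infinite central subgroup, and Corollary~\ref{cenk} forces $G'$ to be finite — a contradiction.

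Next I would produce such a subgroup (or a contradiction directly). Let $U$ be the last non-trivial term of the derived series of $G$: it is abelian, normal, and not torsion-free by the assumption on $Z(G)$, so $T(U)\neq\{1\}$; moreover $T(U)$ is reduced, its maximal divisible subgroup being a periodic divisible abelian normal subgroup of $G$, hence trivial. If $T(U)$ is infinite, either some primary component $T(U)_p$ is infinite — a reduced infinite abelian $p$-group, whose socle is therefore an infinite elementary abelian $p$-subgroup normal in $G$, so the observation above applies — or every $T(U)_p$ is finite and infinitely many are non-trivial, in which case, using $[G,\mathrm{Dr}_{p\in\Pi}T(U)_p]=\mathrm{Dr}_{p\in\Pi}[G,T(U)_p]$ and Corollary~\ref{cenk} (to rule out $Z(G)$ infinite), one obtains infinitely many pairwise distinct subgroups $[G,\mathrm{Dr}_{p\in\Pi}T(U)_p]$ with $\mathrm{Dr}_{p\in\Pi}T(U)_p$ infinite, contradicting $\bar{\mathcal K}_\infty$. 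Hence $T(U)$ is finite; then $U/T(U)$ is torsion-free abelian and normal in $G/T(U)$, so by Lemma~\ref{abel} and Corollary~\ref{cenk} it must be trivial, and therefore $U$ is finite.

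One then finishes by induction on the derived length of $G$: among all counterexamples choose one of least derived length. Since $U$ is finite, $G/U$ is a soluble $\bar{\mathcal K}_\infty$-group with infinite commutator subgroup and strictly smaller derived length, so by minimality $G/U$ must contain a non-trivial periodic divisible abelian normal subgroup $E/U$. The remaining task is to show that $E$ itself contains a non-trivial periodic divisible abelian subgroup normal in $G$, contradicting the hypothesis on $G$. Here $E$ is periodic with $E'\leq U$ finite, so $C:=C_E(E')$ has finite index in $E$, is characteristic in $E$, and is nilpotent of class at most $2$ with $C/(C\cap U)$ periodic divisible abelian; analysing the alternating commutator form on $C/Z(C)$ one sees that its divisible part lies in the radical and so vanishes, whence $C/Z(C)$ is finite; then $Z(C)$ has finite index in $C$ and is finite-by-(periodic divisible abelian), and its maximal divisible subgroup is the required normal subgroup.

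The main obstacle is precisely this last step. One would like to lower the derived length simply by passing to $G/U$, but the hypothesis ``no periodic divisible abelian normal subgroup'' need not survive a quotient by the finite subgroup $U$; the real work is to show that a finite-by-(periodic divisible abelian) normal subgroup still carries a non-trivial characteristic divisible abelian subgroup. Once that delicate point is settled, the rest is a fairly direct assembly of Lemma~\ref{4.5}, Corollary~\ref{meta}, Corollary~\ref{cenk} and the property $\bar{\mathcal K}_\infty$.
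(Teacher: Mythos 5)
Your proposal is correct, but it takes a genuinely different route from the paper's. The paper argues by induction on the derived length to reduce to a metabelian group, uses Corollary \ref{meta} to reduce further to the periodic case, and then, writing $A=G'$, analyses the subgroups $[A,g]$ element by element: it first shows $[A,g]^p$ is finite for some relevant prime $p$ (via Theorem A applied to quotients $G/[A,g]^p$, the chain $A\geq A^{2!}\geq A^{3!}\geq\dots$ and the fact that the resulting divisible subnormal subgroup must be trivial under the hypothesis), then applies Lemma \ref{4.5} to $\langle A,g\rangle/[A,g]^p$ to get $[A,g,g]$ finite, and finishes with Corollary \ref{nilp} and $\bar{\mathcal{K}}_{\infty}$ to bound first $[A,G]$ and then $G'$. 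You instead run a minimal-counterexample induction on the derived length centred on the last non-trivial term $U$ of the derived series: you show $U$ is finite using Lemma \ref{abel}, the socle of an infinite reduced primary component together with your ``core observation'' (Lemma \ref{4.5} for elements of finite order, Corollary \ref{meta} for elements of infinite order, then $\bar{\mathcal{K}}_{\infty}$ with $X=A$), and the primary decomposition trick; and then, since the divisibility hypothesis need not pass to $G/U$, you prove a lifting statement --- a normal subgroup $E$ with $E/U$ non-trivial periodic divisible abelian contains a non-trivial characteristic periodic divisible abelian subgroup, obtained via $C_E(E')$, class-two nilpotency, the commutator form, and the structure of finite-by-divisible abelian groups --- contradicting the hypothesis on $G$. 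Both proofs rest on Lemma \ref{4.5} and the earlier corollaries, but yours avoids the factorial-chain divisibility argument and the use of Theorem A at this stage, at the price of some extra abelian group theory (an infinite reduced abelian $p$-group has infinite socle; a reduced abelian group that is finite-by-divisible is finite), and it has the merit of addressing explicitly the fact that the hypothesis is not quotient-closed, a point the paper's opening sentence ``by induction on the derived length\dots'' passes over quickly. A few details worth writing out in a final version: $[A,\langle g,A\rangle]=[A,g]$ and $[A,g]\leq A$, so $[A,G]$ is a subgroup of the abelian group $A$ generated by finitely many finite subgroups and hence finite; and in the last step $CU/U$ and $Z(C)U/U$ are finite-index subgroups of the divisible group $E/U$, hence equal to it, which is what makes $C/(C\cap U)$ and $Z(C)/(Z(C)\cap U)$ divisible and the maximal divisible subgroup of $Z(C)$ non-trivial.
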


\begin{proof}
    By induction on the derived length of $G$, it is possible to assume that $G''$ is finite, so that, by replacing $G$ by $G/G''$, we may even suppose that $G''=\{1\}$. Thus Corollary \ref{meta} allows us to assume that the group $G$ is periodic. Put $A=G'$, which may be assumed infinite and let $g$ be any element of $G$ such that $[A,g]=[A,<g>]\neq \{1\}$. Consider the set $\pi$ consisting of all prime numbers $p$ for which $[A_p,g]\neq \{1\}$, and assume by contradiction that $[A,g]^p$ is infinite for all $p\in \pi$. Then $\frac{G}{[A,g]^p}$ has the property $\bar{\mathcal{K}}$, and hance $[A,g]/[A,g]^p$ is finite by Theorem A. It follows that $[A,g]/[A,g]^m$ is finite for each positive $\pi$-number $m$, and so $[A,g]$ has infinite exponent. Now form the chain $$A=A^{1!}\geq A^{2!}\geq \, ... \, \geq A^{n!} \geq \, .... $$ Since $A^{n!}$ is infinite for all $n$, there are finitely many subgroups of the form $[A^{n!},<g,A^{n!}>]=[A^{n!},g]$ and hence $$[A,g]^{k!}=[A,g]^{(k+1)!}= \, ...$$ for some $k>0$. This means that the subnormal subgroup $[A,g]^{k!}$ is divisible and hence trivial. This contradiction shows that $[A,g]^p$ is finite for some $p\in \pi$. Application of Lemma \ref{4.5} to the group $<A,g>/[A,g]^p$ shows that the subgroup $[A,g,g]$ is finite. Since $<A,g>/[A,g,g]$ is nilpotent, it follows from Corollary \ref{nilp} that $[A,g]$ is also finite. On the other hand, by the property $\bar{\mathcal{K}}_{\infty}$, there are only finitely many subgroups of the form $[A,<g,A>]=[A,g]$, and so $[A,G]$ is finite. As $\frac{G}{[A,G]}$ is nilpotent, we have by Corollary \ref{nilp} again that $A=G'$ is finite.
\end{proof}

\begin{lem}\label{rank}
    Let $G$ be a soluble group with the property $\bar{\mathcal{K}}_{\infty}$ and let $D$ be the largest divisible abelian periodic normal subgroup of $G$. If $D$ has infinite total rank, then $G'$ is finite.
\end{lem}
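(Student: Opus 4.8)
The plan is to exploit the fact that a divisible abelian periodic group of infinite total rank contains, for every prime $p$ occurring with infinite multiplicity (or, failing that, infinitely many primes each occurring at least once), a very large supply of distinct $G$-invariant subgroups, and to show that this forces almost all of $D$ into the center. First I would reduce to understanding how $G$ acts on $D$. Writing $D = \mathrm{Dr}_{p} D_p$ for the primary decomposition, infinite total rank means that either some $D_p$ has infinite rank, or infinitely many primes $p$ have $D_p \neq \{1\}$. In either case I would aim to produce an infinite family of infinite $G$-invariant subgroups $\{N_j\}$ of $D$ whose pairwise "interactions" are independent enough that the commutators $[N_j, G]$ (or $[G, \prod_{j\in J} N_j]$ over subsets $J$) would be pairwise distinct unless they are eventually trivial — exactly the strategy used in Lemma \ref{4.5}. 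The $\bar{\mathcal{K}}_{\infty}$ condition then forces $[G, N_j] = \{1\}$ for all but finitely many $j$, i.e. a cofinite-corank part of $D$ is central.

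The key steps, in order. (1) Pass to $G/G''$; by induction on derived length, reduce to $G$ metabelian, and if $G$ is non-periodic invoke Corollary \ref{meta} directly, so assume $G$ is periodic and metabelian with $A = G'$ abelian. (2) Show that $C_D(G)$ has finite corank in $D$: if not, then $D/C_D(G)$ still has infinite total rank, and using complete reducibility of the relevant $p$-components as in Lemma \ref{4.5} together with the divisibility of $D$, build infinitely many infinite $G$-invariant subgroups on which $G$ acts non-trivially, producing infinitely many distinct subgroups $[G, N_J]$ — contradicting $\bar{\mathcal{K}}_{\infty}$. (3) Hence $C_D(G)$, and in particular $Z(G) \cap D$, has finite index... no — finite corank; but a divisible abelian group of finite corank inside $D$ still can be infinite, so I would argue instead that $C_D(G)$ is infinite (having finite corank in an infinite-rank group) and therefore $Z(G)$ is infinite. (4) Apply Corollary \ref{cenk}: a locally (soluble-by-finite) — here soluble — $\bar{\mathcal{K}}_{\infty}$-group with infinite center has finite derived subgroup. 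This closes the proof.

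The main obstacle is step (2): making precise the combinatorial-module argument that infinite total rank of $D/C_D(G)$ yields infinitely many pairwise-distinct commutator subgroups of the form $[G, H]$ with $H$ infinite. The subtlety is that $G$ need not act as a finitely generated group, and $D/C_D(G)$ is divisible, so one cannot directly decompose it into finite irreducible pieces as in Lemma \ref{4.5}; instead I would work inside the socle (the subgroup of elements of prime order), which \emph{is} a restricted direct product of finite irreducible $G$-submodules once we localize at each prime, recover an infinite independent family there, and then note that distinct invariant subgroups of the socle generate distinct commutators $[G, -]$ because $[G, B] \leq B$ for $B$ a $G$-submodule and $[G,B]$ recovers $B$ on the pieces where the action is non-trivial. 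Lifting this back to $D$ via divisibility is then routine. A secondary technical point is ensuring the family of subgroups produced is genuinely infinite and consists of infinite subgroups, which follows from the infinite-total-rank hypothesis but must be tracked carefully through the reduction.
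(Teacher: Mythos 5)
Your endgame (steps (3)--(4): an infinite center plus Corollary \ref{cenk}) is exactly how the paper finishes, but the engine of your argument, step (2), has a genuine gap. You need an infinite independent family of $G$-invariant subgroups of $D$ on which $G$ acts non-trivially, and you propose to extract it from a decomposition of the socle of each primary component into finite irreducible $G$-submodules. Such a decomposition does not exist in general: the complete reducibility used in Lemma \ref{4.5} comes from Maschke's theorem for a \emph{finite cyclic} group $\langle g\rangle$ of order coprime to $p$, whereas here the acting group is all of $G$, which may be infinite, need not be finitely generated, and may contain $p$-elements acting non-semisimply on $D[p]$; the $\mathbb{F}_p G$-module $D[p]$ then need not contain any finite irreducible submodule, let alone be a restricted direct product of them (this is precisely why Lemma \ref{4.5} treats the case $q=p$ separately, via nilpotency and Corollary \ref{nilp}). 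Without that decomposition the family $\{N_j\}$ and the pairwise distinct subgroups $[G,N_J]$ are not available, so the contradiction with $\bar{\mathcal{K}}_{\infty}$ never materializes. A secondary problem is step (1): the induction on derived length would have to be applied to $G'$, but the hypothesis that the largest divisible abelian periodic normal subgroup has infinite total rank does not pass to $G'$, so you cannot assume $G''$ finite, and consequently you are not entitled to the metabelian and (via Corollary \ref{meta}) periodic reductions on which your module argument leans. (The remark that "lifting back to $D$ via divisibility is routine" is also inaccurate --- an automorphism of a Pr\"ufer group can be trivial on the socle without being trivial --- though for the purpose of making $Z(G)$ infinite, centralizing an infinite piece of the socle would suffice.)

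The paper avoids any global module decomposition by arguing element by element that $[D,g]=\{1\}$ for every $g\in G$. If $g$ has infinite order, each $\langle x,g\rangle$ with $x\in D$ is finitely generated metabelian, hence residually finite, so its derived subgroup is finite by Lemma \ref{rf}; since $\bar{\mathcal{K}}_{\infty}$ leaves only finitely many subgroups $[\langle x,g\rangle,\langle g\rangle]$, the subgroup $[D,g]$ is finite, and being an image of the divisible group $D$ it is trivial. If $g$ has finite order, the infinite total rank of $D$ provides a strictly ascending chain of $g$-invariant divisible subgroups $D_n$ of finite rank; finiteness of the set of subgroups $[\langle g,D_n\rangle,D_n]$ forces $[D_n,g]=[D_s,g]$ for all large $s$, whence $D_s\leq D_nC_D(g)$ and $C_D(g)$ is infinite. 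The infinite subgroup $C_D(g)$ is then used to inflate the finite subgroups generated by single elements of $D$ into infinite ones, so that $\bar{\mathcal{K}}_{\infty}$ bounds the subgroups $[x,g]^{\langle g\rangle}$ and again $[D,g]$ is finite, hence trivial. Thus $D\leq Z(G)$, and Lemma \ref{centro} together with Theorem A concludes, as in your step (4). If you want to salvage your outline, you should replace the socle-decomposition step by these two element-wise devices (residual finiteness via Lemma \ref{rf} for infinite-order elements, and padding by $C_D(g)$ for finite-order ones).
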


\begin{proof}
    Consider an element $g$ of $G$ and suppose first that $g$ has infinite order. If $x$ is any element of $D$, then the subgroup $H=<x,g>$ is finitely generated and metabelian, so it is residually finite; thus $H'$ is finite by Lemma \ref{rf}. On the other hand, there are finitely many subgroup of the type $[<x,g>,<g>]$, so $[D,g]$ is finite and divisible, and therefore $[D,g]=\{1\}$. Suppose now that $g$ has finite order. Since $D$ has infinite total rank, there is an infinite chain $$\{1\}<D_1< D_2< \, ...\,<D_n<\,...$$ of subgroups of $D$ such that each $D_n$ is divisible of finite rank and $D_n^g=D_n$. By hypothesis, there are finitely many subgroups of the form $[<g,D_n>,D_n]$ and so $[D_n,g]=[D_s,g]$ for any $s\geq n$ for some positive integer $n$. It follows that $D_s\leq D_nC_D(g)$ for any $s\geq n$, so that $C_D(g)$ must be infinite. Consequently, there are finitely many subgroups of the type $[<x>,<C_D(g),g>]=[x,g]^{<g>}$, where $x\in D$. Therefore $[D,g]$ is finite and hence trivial. We have shown that $D$ is contained in $Z(G)$, so that $G'$ is finite by Lemma \ref{centro} and Theorem A.
\end{proof}

We are now in a position to prove our main result about the property $\bar{\mathcal{K}}_{\infty}$.

\medskip

\noindent \textbf{Theorem B}

\noindent A soluble-by-finite group $G$ has the property $\bar{\mathcal{K}}_{\infty}$ if and only if it has finite commutator subgroup or it is a finite extension of a group of the type $p^{\infty}$ for some prime number $p$.

\begin{proof}
    The condition of the statement is obviously sufficient. In order to prove the converse statement, first suppose that the group is a soluble $\bar{\mathcal{K}}_{\infty}$-group and that $G'$ is infinite. Let $D$ be the largest divisible abelian periodic normal subgroup of $G$. Then $D$ is a non-trivial group with finite total rank by Lemmas \ref{4.7} and \ref{rank}. For each positive integer $n$, let $D[n]$ be the subgroup consisting of all elements $a$ of $D$ such that $a^n=1$. Assume that $G$ contains an element $x$ of infinite order. Then the subgroup $[<x>,<x,D[n]>]=[D[n],x]$ is finite for every $n$ and there are only finitely many of these subgroups, so that $[D,x]$ is finite and hence $[D,x]=\{1\}$. As in the proof of Lemma \ref{per} it can be shown that the derived subgroup of a soluble-by-finite $\bar{\mathcal{K}}_{\infty}$-group is periodic, so that $G$ is generated by elements of infinite order; thus $D\leq Z(G)$ and $Z(G)$ is infinite, a contradiction by Corollary \ref{cenk}. Therefore, $G$ is periodic. Consider the centralizer $C=C_G(D)$. Then $D$ is contained in $Z(C)$, and so $C'$ is finite by Corollary \ref{cenk}. Let $K/C'$ be the subgroup generated by all elements of prime order in $C/C'$. If $K$ is infinite, then the group $G/K$ has the property $\bar{\mathcal{K}}$, and so $(G/K)'$ is finite by Theorem A; thus $[D,G]\leq K$ and hence $[D,G]={1}$, which is a contradiction because $Z(G)$ must be finite. Therefore $K$ is finite and it follows that $C$ is a Černikov group. Moreover, $G/C$ is isomorphic to a periodic group of automorphisms of $D$, so it is finite (see [\cite{robinson}, Part 1, p. 85]) and $G$ is a Černikov group in this case. Suppose now that $G$ is soluble-by-finite. Let $S$ be a soluble normal subgroup of finite index in $G$, and assume by contradiction that $G$ is not a Černikov group. Then $S$ cannot be a Černikov group and so for each element $g$ of $G$, the subgroup $[<S,g>,S]=[S,g]S'$ is finite. Since there are only finitely many subgroups of the form $[<S,g>,S]$, it follows that $[S,G]$ is finite. On the other hand, the group $G/[S,G]$ is central-by-finite, so $G'/[S,G]$ is finite and $G'$ itself is finite. This contradiction shows that $G$ is a Černikov group.
    
    Let $J$ be the largest divisible abelian periodic normal subgroup of $G$. Assume for a contradiction that $J$ is not of the type $p^{\infty}$ for any prime number $p$ and that $G'$ is infinite. Thus there exists in $J$ a strictly ascending chain of infinite subgroups $$J_1<J_2<\,...\, <J_n<J_{n+1}<\,....$$ Let $x$ be an element of $G$ such that $C_{J}(x)<J$. It follows from the property $\bar{\mathcal{K}}_{\infty}$ that there exists a positive integer $m$ such that $[<x>,J_m]=[<x>,J_n]$ for every positive integer $n\geq m$. In particular, fixed $n\geq m$, since $J$ is an abelian normal subgroup of $G$, we have that $[x,J_n^{<x>}]=[x,J_m^{<x>}]=\{[x,a] \ |\ a\in J_m^{<x>}\}$. Then $J_n\leq <J_m,x>C_{<x>J}(x)$ for each $n\geq m$ and hence the subgroup $C_{<x>J}(x)$ is infinite. Consider now $C_J(x)$. Then $|C_{<x>J}(x):C_J(x)|$ is finite, and so $C_J(x)$ is infinite. Let $H$ be the subgroup $<x,C_G(J)>$. Thus $C_J(x)\leq Z(H)$, so $H'$ is finite by Corollary \ref{cenk}, and hence $[x,J]=\{1\}$. Then $J\leq Z(G)$ and, again by Corollary \ref{cenk}, $G'$ is finite. This contradiction completes the proof of the Theorem. 
\end{proof}


\end{document}